\documentclass[a4paper,12pt]{article}

\usepackage[english]{babel}
\newcommand*{\arXiv}[1]{\href{http://arxiv.org/abs/#1}{arXiv:#1}}
    %% font & format %%
\usepackage[margin=2cm]{geometry}
\usepackage{type1cm}	
\usepackage{titlesec}   
\usepackage{fancyhdr}   

\usepackage[dvipsnames]{xcolor}

\usepackage{graphicx}   % advanced \includegraphics options
\usepackage{titling}    % enhanced title mechanism
\usepackage{tabularx}   % enhanced table mechanism
\usepackage[shortlabels]{enumitem} % Enhanced list environment, better than "enumerate".

% \usepackage{ulem}       % underline+emphasis
    %% Math & Symbols %%
\usepackage{amsmath, amsthm, amssymb} % AMS math environment
\usepackage{mathtools}  % new math tools and fix bugs of amsmath

    %% TikZ %%
\usepackage{tikz-cd}    % "tikz" included
\usepackage{pgfplots}
\pgfplotsset{compat=1.18}
\usepackage[breakable]{tcolorbox}   % color box
\usepackage{standalone} %To input standalone
\usetikzlibrary{decorations.pathmorphing}
\usetikzlibrary{calc, arrows, matrix}
\usetikzlibrary{external}
\tikzexternalize[prefix=tikz/]

    %% Other package %%
\usepackage[unicode=true, pdfborder={0 0 0}, bookmarksdepth=-1]{hyperref} % ref加強版，請儘量把hyperref放在最後一個引入的package
\usepackage[capitalise,nameinlink]{cleveref}

%%%頁面設定%%%
\setlength{\headheight}{15pt}  %with titling
\setlength{\droptitle}{-1.5cm} %title 與上緣的間距
\parindent=24pt %設定縮排的距離

%%%證明、結論、定義等等的環境，derived from CKMSG%%%
\newtheoremstyle{mystyle}% 自定義Style
  {6pt}{15pt}%       上下間距
  {\it}%               內文字體
  {}%               縮排
  {\bf}%            標頭字體
  {.}%              標頭後標點
  {1em}%            內文與標頭距離
  {}%               Theorem head spec (can be left empty, meaning 'normal')

% 改用粗體，預設 remark style 是斜體
\theoremstyle{mystyle}	% 定理環境Style
 %修改Proof 標頭，Proof有內建
\newtheorem{theorem}{Theorem}[section]

\newtheorem{corollary}[theorem]{Corollary}

\newtheorem{lemma}[theorem]{Lemma}

\newtheorem{question}{Question}

\newtheorem{conjecture}{Conjecture}
\theoremstyle{definition}
\newtheorem{definition}[theorem]{Definition}

%%% new commands %%%

 %Use \norm* to get \left\| \right\|

%%%use with fancyhdr
\pagestyle{fancy}  % fancy: fancyhdr
\lhead{}
\chead{}
\lfoot{}
\cfoot{}
\rfoot{\thepage}

\DeclareMathOperator{\Freq}{Freq}
\linespread{1.5}
\begin{document}

\newcommand{\F}{\mathcal{F}} 
\newcommand{\fq}{\Freq_{\F}}
 
\title{Further analysis on the second frequency of union-closed set families}
\author{Saintan Wu\thanks{
National Taiwan University, r12201025@ntu.edu.tw}}
\date{\today}
\maketitle

\begin{abstract}
    The Union-Closed Sets Conjecture, also known as Frankl's conjecture, asks whether, for any union-closed set family $\mathcal{F}$ with $m$ sets, there is an element that lies in at least $\frac{1}{2}\cdot m$ sets in $\mathcal{F}$. In 2022, Nagel posed a stronger conjecture that within any union-closed family whose ground set size is at least $k$, there are always $k$ elements in the ground set that appear in at least $\frac{1}{2^{k-1}+1}$ proportion of the sets in the family.

    Das and Wu showed that this conjecture is true for $k\geq 3$ and $k=2$ if $|\mathcal{F}|$ is outside a particular range. In this companion paper, we analyse further when $\mathcal{F}$ fails Nagel's conjecture for $k=2$ via linear programming.
\end{abstract}

\section{Introduction}
A \textit{union-closed} set family $\mathcal{F}$ is a set family satisfying $A,B\in \mathcal{F}\implies A\cup B\in \mathcal{F}$. Nagel \cite{nagel2023notes} conjectured that:

\begin{conjecture}\label{conj:Nagel}
    For any union-closed set family $|\mathcal{F}|$ with ground set $\geq k$, there are $k$ elements, each of them lies in at least $\frac{1}{2^{k-1}+1}|\mathcal{F}|$ many sets in $\mathcal{F}$.
\end{conjecture}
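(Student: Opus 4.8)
Throughout write $m = |\F|$ and, for an element $x$ of the ground set, let $d(x) = |\{A \in \F : x \in A\}|$ denote its abundance, so that the statement asks for $k$ elements with $d(x) \geq \frac{m}{2^{k-1}+1}$. Two reductions organise the plan. First, since $\frac{1}{2^{k-1}+1}$ is strictly decreasing in $k$, the requirement becomes \emph{easier} per element but demands more witnesses as $k$ grows; and the case $k=1$ is literally Frankl's conjecture, so no unconditional proof of the full statement is available and I would target only $k \geq 2$. Second, I would invoke the Das--Wu theorem quoted above, which settles $k \geq 3$ outright and $k = 2$ whenever $m$ lies outside a particular range. The entire burden then falls on the single remaining case: $k = 2$ with $m$ (equivalently, the abundance profile of $\F$) inside that residual range.

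For $k = 2$ the target is that the second-largest abundance is at least $m/3$. Let $c = \frac{3-\sqrt5}{2} > \frac13$ be a known unconditional single-element frequency constant and let $a$ be a most abundant element, so $d(a) \geq cm$ settles the first witness automatically. To produce a second witness $b \neq a$ I would exploit that \emph{both} halves of the split at $a$ are union-closed: the subfamily $\F_{\bar a} = \{A \in \F : a \notin A\}$ on $m - d(a)$ sets, and the subfamily obtained by deleting $a$ from every member of $\{A \in \F : a \in A\}$ on $d(a)$ sets. Applying the single-element bound to the larger of these two subfamilies yields an element $b \neq a$ with $d(b) \geq c\,\max\!\big(d(a),\,m-d(a)\big) \geq \tfrac{c}{2}\,m$. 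Since $\tfrac{c}{2} \approx 0.191 < \tfrac13$, this clears the $m/3$ threshold only when the split at $a$ is very lopsided, i.e. when $d(a)/m$ is close to $0$ or to $1$; the stubborn middle range of $d(a)/m$ is exactly where a second $m/3$-abundant element is \emph{not} forced by this argument.

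The hard part, and the focus of the paper, is to rule out that middle range, for which I would set up a linear program. The variables record the proportions of sets of $\F$ according to their trace on a bounded set of candidate-abundant elements and the relevant overlap pattern; the constraints come from union-closedness — closure under unions forces the coexistence of certain traces and bounds others — together with the failure hypothesis that every element other than $a$ satisfies $d(x) < m/3$; and the objective tests feasibility of such a configuration across the middle range. If the LP is infeasible throughout, the conjecture for $k = 2$ follows. I expect the main obstacle to be that the union-closure constraints are delicate to capture tightly enough for the relaxation to certify infeasibility, so that the natural contribution is to analyse precisely the sub-range of the split on which the LP remains feasible, thereby pinning down where $\F$ could still fail Nagel's conjecture.
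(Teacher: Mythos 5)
The statement you are trying to prove is a conjecture: neither this paper nor its predecessor proves it, and for $k=1$ it is exactly the open Union-Closed Sets Conjecture. So the honest benchmark is not whether you reproduce a proof but whether your programme closes the one case the literature leaves open, namely $k=2$ with $|\mathcal{F}|$ in the residual range of \cref{thm:main}. It does not. Your plan ends with ``if the LP is infeasible throughout, the conjecture for $k=2$ follows,'' and the content of this paper is precisely that the natural LP relaxation is \emph{not} infeasible throughout: even after adding all the structural constraints the author can extract (flexible and covered elements, the incidence-maximising choice of $S$), a window $81 \le |\mathcal{F}| \le 113$ survives (\cref{thm:sidemain}). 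The decisive step of your argument is therefore exactly the step that is missing, and it is known not to go through with the tools you name.

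Two more concrete issues. First, your fallback for producing a second witness --- split $\mathcal{F}$ at a most abundant element $a$ and apply the $\frac{3-\sqrt{5}}{2}$ bound to the larger half --- only yields $d(b) \ge \frac{c}{2}m \approx 0.19m$, which you concede falls short of $m/3$ except for lopsided splits; this does not reduce the problem, it restates that the middle range is open. Second, your LP is underspecified in the one place where the paper's LP has real content: the variables there are the trace counts $q_T = |\{A \in \mathcal{F} : A \cap S = T\}|$ over a \emph{minimal $2$-good set} $S$, and the nontrivial constraints $q_T \ge 1$ for every $T \subseteq S$ come from the shattering construction $F_T = \bigcup_{y \in T} F_y$, where the $F_y$ with $F_y \cap S = \{y\}$ exist by minimality of $S$; these are later sharpened to $q_T \ge 2$ via an auxiliary $x$-flexible element. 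Without identifying a structure like the $2$-good set and the witnesses of its minimality, ``constraints from union-closedness'' on traces over ``candidate-abundant elements'' have no teeth, and the resulting relaxation will simply be feasible for all $m$.
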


Let $F_k(\mathcal{F})$ denote the ratio of sets in $\mathcal{F}$ that contain the $k$-th most frequent element. So the above conjecture states that $F_k(\mathcal{F})\geq \frac{1}{2^{k-1}+1}$ for such families.

Notice that when $k=1$, it conjectures the well-known Union-Closed Sets Conjecture, that every nonempty union-closed family has an element appearing in at least $\frac{|\mathcal{F}|}{2}$ sets.

\cite{our_main_paper} gives a notion about $k$-good set, which here we will only use $2$-good sets:
\begin{definition}
    Given a union-closed family $\mathcal{F}$, a set $S$ is \textbf{$2$-good} for $\mathcal{F}$ if it does not contain $1$, and for each set $A$ that is not $\varnothing$ nor $\{1\}$, $S\cap A \neq\varnothing$.

    A $2$-good set is called \textbf{minimal} if its proper subsets are not $2$-good.
\end{definition}

Then, in the paper, \cite{our_main_paper} proved, via the entropic method for large families and shattering method for small families, that

\begin{theorem}
\label{thm:main}
    Let $k \geq 2$, and let $\mathcal{F}$ be a union-closed set family with $|\mathcal{F}| = m$ and $|\cup_{F \in \mathcal{F}} F| \ge k$. Then the $k$th-most frequent element lies in at least $\frac{m}{2^{k-1} + 1}$ sets in $\mathcal{F}$, with equality only if $\mathcal{F}$ is a near-$k$-cube, provided:
    \begin{itemize}
        \item[(i)] $k \ge 3$, or
        \item[(ii)] $k = 2$, and either $m \le 44$ or $m \ge 114$.        
    \end{itemize}
\end{theorem}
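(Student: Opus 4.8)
The plan is to prove the inequality by contradiction and to split the analysis according to the size $m = |\mathcal{F}|$: an information-theoretic argument handles large families, while a combinatorial shattering argument organised around minimal $2$-good sets handles small ones. Normalise so that element $1$ is most frequent, and for each ground-set element $i$ write $p_i$ for its frequency (the fraction of sets in $\mathcal{F}$ containing $i$), so that $F_k(\mathcal{F})$ is the $k$-th largest value among the $p_i$. Suppose toward a contradiction that $F_k(\mathcal{F}) < \frac{1}{2^{k-1}+1}$; then every element outside the $k-1$ most frequent has frequency strictly below $\frac{1}{2^{k-1}+1}$. The extremal configurations to keep in mind are the near-$k$-cubes, which should emerge as the unique equality cases at the end.

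For large $m$ I would use the entropic method. Take $X$ and $Y$ independent and uniform on $\mathcal{F}$; union-closedness gives $X \cup Y \in \mathcal{F}$, hence $H(X \cup Y) \le \log_2 m = H(X)$. The $i$-th coordinate of $X \cup Y$ is present with probability $1-(1-p_i)^2$, and for small $p_i$ the binary entropy $h\big(1-(1-p_i)^2\big)$ substantially exceeds $h(p_i)$; running the Gilmer-type conditioning inequality that lower-bounds $H(X \cup Y)$ in terms of the $p_i$ therefore forces $H(X \cup Y) > H(X)$ once too many coordinates are small, a contradiction. To target the $k$-th frequency rather than the first, I would first condition on the indicator coordinates of the $k-1$ most frequent elements; the $2^{k-1}$ conditioning branches are exactly what produces the denominator $2^{k-1}+1$, and on each branch the remaining low-frequency elements feed the union-entropy estimate. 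This should close the case $k \ge 3$ for all $m$ and the case $k = 2$ for $m \ge 114$.

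For small $m$ I would run the shattering argument through $2$-good sets. By definition a $2$-good set $S$ meets every $F \in \mathcal{F}$ other than $\varnothing$ and $\{1\}$, so averaging over $S$ shows some element of $S$ lies in at least $\frac{1}{|S|}\big(m - |\mathcal{F} \cap \{\varnothing,\{1\}\}|\big)$ sets; since such an element is distinct from $1$, this already yields a lower bound on $F_2(\mathcal{F})$ that is strong whenever some minimal $2$-good set is small (roughly $|S| \le 3$). The role of shattering is to bound the size of a minimal $2$-good set: if no small $2$-good set exists, the family must realise many distinct intersection patterns on the relevant coordinates, which for small $m$ is impossible. Combining the two directions leaves only finitely many borderline structures for each $m \le 44$, which can be checked directly, and tracing equality through the averaging step pins down the near-$2$-cube.

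The main obstacle is the middle range. For $k = 2$ the constant obtained from the current union-entropy inequalities is not sharp enough to reach $\frac{1}{3}$ below $m = 114$, while the shattering case analysis proliferates beyond $m = 44$, so the window $45 \le m \le 113$ resists both methods; closing it, or at least describing the potential failures of Conjecture \ref{conj:Nagel} there, is precisely the purpose of the linear-programming analysis in the present companion paper. A secondary difficulty is the equality analysis: one must verify that equality in either the entropy inequality or the covering average can hold only for a near-$k$-cube, which requires ruling out every other union-closed structure that might saturate the bound.
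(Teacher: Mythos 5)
First, note that the paper does not actually prove \cref{thm:main}: it is imported from the companion paper \cite{our_main_paper}, and the present paper only reviews the shattering half of that argument (Section 2.1) before building on it. Your high-level plan --- an entropic argument for large $m$, a shattering argument organised around minimal $2$-good sets for small $m$, with the window $45 \le m \le 113$ left open for $k=2$ --- does match the strategy the paper attributes to \cite{our_main_paper}. But as a proof there are concrete gaps in both halves.

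In the small-$m$ half, your averaging step is too weak exactly where you need it. A $2$-good set $S$ meets all but at most two sets of $\mathcal{F}$, so averaging gives an element of $S$ lying in at least $(m-2)/|S|$ sets; for $|S|=3$ this is $(m-2)/3 < m/3$, so it does not contradict $f_2(\mathcal{F}) \le \frac13$, and your claim that small $|S|$ is "already" handled fails at $|S|=3$. The argument the paper reviews instead bounds the \emph{incidence} $\sum_{A\in\mathcal{F}}|A\cap S|$ from below using the $2^{|S|}$ distinct sets $F_T=\bigcup_{y\in T}F_y$ guaranteed by minimality of $S$ and union-closedness: sets meeting $S$ in several elements are counted with multiplicity, and comparing with the upper bound $|S|\cdot\frac m3$ is what rules out $|S|\le 3$ outright and forces $m\ge 45$ when $|S|=4$ and $m\ge 70.5$ when $|S|=5$. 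Your "finitely many borderline structures \ldots checked directly" is not how this closes; it closes by that counting. In the large-$m$ half, the assertion that conditioning on the $k-1$ most frequent coordinates "produces the denominator $2^{k-1}+1$" is the entire content of the entropic step and cannot be taken on faith: Gilmer-type union-entropy inequalities are known \emph{not} to reach the analogous threshold $\frac12$ for $k=1$, so one must actually exhibit the inequality that attains $\frac{1}{2^{k-1}+1}$ for $k\ge 2$ and verify where it applies (all $m$ for $k\ge 3$, but only $m\ge 114$ for $k=2$); nothing in your sketch explains why the $k=1$ obstruction disappears. Finally, the equality characterisation (near-$k$-cube) is flagged as a difficulty rather than argued.
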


We still did not know if there is a counterexample for $k=2$ when $|\mathcal{F}|\in [45,113]$. We will analyse this case more carefully and narrow the range.

Our main result in this paper is the following:
\begin{theorem}\label{thm:sidemain}
    Let $\F$ be union-closed with $|\cup_{F \in \F} F| \ge 2$, and that $\F$ is not a near-$2$-cube. Then, if $f_2(\F) \le \frac13$, we must have 
    \begin{enumerate}[(i)]
        \item $81 \le |\F| \le 113$, and
        \item All minimal $2$-good sets $S$ have size $4$. 
    \end{enumerate}
\end{theorem}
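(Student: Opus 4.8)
The plan is to fix a most-frequent element of $\F$, relabel it $1$, and work with a minimal $2$-good set $S=\{s_1,\dots,s_t\}$; one exists because the ground set with $1$ removed is always $2$-good. Minimality gives, for each $i$, a witness $A_i\in\F$ with $A_i\cap S=\{s_i\}$, and union-closedness then produces, for every nonempty $T\subseteq\{1,\dots,t\}$, the set $\bigcup_{i\in T}A_i\in\F$ whose trace on $S$ is exactly $\{s_i:i\in T\}$. Setting $n_j=\lvert\{A\in\F:\lvert A\cap S\rvert=j\}\rvert$, this yields $n_j\ge\binom{t}{j}$ for $1\le j\le t$, while the only sets with empty trace are $\varnothing$ and $\{1\}$, so $n_0=c\le 2$.

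The first quantitative step is a double count of the incidences between $\F$ and $S$. Each $s_i\neq 1$, so $\fq(s_i)\le f_2(\F)\,m\le \tfrac{m}{3}$, whence $\sum_j j\,n_j=\sum_i \fq(s_i)\le \tfrac{t}{3}m$; together with $\sum_j n_j=m$ and the binomial bounds this gives the master inequality
\[
  2^{t-1}(t-2)+1 \;\le\; \frac{(t-3)m}{3}+c.
\]
For $t\le 2$ it forces $m\le 3$, for $t=3$ it would force $5\le c$, and for $t\ge 6$ it forces $m\ge 127$. Since $\F$ is not a near-$2$-cube, \cref{thm:main} rules out $m\le 44$ and $m\ge 114$, giving $45\le m\le 113$; combined with the above, only $t\in\{4,5\}$ survive. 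Unfortunately the master inequality alone gives only $m\ge 45$ when $t=4$ and $m\ge 71$ when $t=5$, which is not yet enough.

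To close the gap I would refine the count by tracking element $1$. Split $n_j=n_j^0+n_j^1$ according to whether a set omits or contains $1$, introduce $d=\fq(1)=\sum_j n_j^1$, and impose: (a) the frequency caps $\fq(s_i)\le \tfrac{m}{3}$ together with $d\ge \fq(s_i)$, encoding that $1$ is most frequent; (b) the union-closure support relations, namely that if the classes $(b_1,T_1)$ and $(b_2,T_2)$ are nonempty then so is $(b_1\vee b_2,\,T_1\cup T_2)$; and (c) the exclusion of the lone near-$2$-cube support forbidden by hypothesis. Enumerating the finitely many admissible support patterns and solving the resulting linear program in each should eliminate $t=5$ completely and push the $t=4$ bound up to $m\ge 81$, establishing (i) and (ii) together.

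The hard part is constraint (b): union-closedness is not linear, so the genuine work lies in encoding it as a finite list of occupied-class patterns and proving that no omitted pattern can lower the optimum. I expect the $t=4$ analysis---sixteen trace classes doubled by the $1$-coordinate and squeezed between the frequency caps and the most-frequent condition---to be exactly where the threshold $81$ is decided, and where the non-near-$2$-cube hypothesis is needed to discard the single otherwise-feasible extremal configuration.
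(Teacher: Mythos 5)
Your first stage---extracting the witnesses $A_i$, the $2^t$ distinct traces, the incidence count giving the master inequality, and the appeal to \cref{thm:main} to confine $m$ to $[45,113]$ and $t$ to $\{4,5\}$---is correct and is exactly the paper's starting point. The gap is in the refinement that is supposed to carry $45$ up to $81$ and kill $t=5$, which is precisely where all the work of this paper lives, and the specific refinement you propose cannot do it. Splitting each trace class by whether a set contains $1$ adds nothing: the optimal solution of the base LP ($q_\varnothing=2$, $q_T=8$ for singletons, $q_T=1$ otherwise) can be realized with every nonempty set containing $1$, in which case your constraint (a) is vacuous (the frequency of $1$ is $m-1$), your support relations (b) hold trivially inside the $b=1$ layer, and the LP optimum remains $45$. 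The near-$2$-cube exclusion is also not available for discarding an extremal configuration at this stage; in the paper it is spent entirely on invoking \cref{thm:main} to get the window $[45,113]$.

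The idea you are missing is that the element to track is not $1$ but an auxiliary element $x\notin S\cup\{1\}$. Since some singleton class $\{F:F\cap S=\{a\}\}$ has at least $8$ members, two of them differ at some $x\neq 1$, making $a$ ``$x$-flexible.'' One then cases on the set $C$ of elements $y\in S$ covered by $x$ (those for which $S+x-y$ is $2$-good). If $T$ avoids $C$ and contains $a$, one can build two sets with trace $T$, one containing $x$ and one not, forcing $q_T\ge 2$ (\cref{lemma:smallway}); if $C$ is large, $x$ is forced into more than $m/3$ sets, contradicting $x\neq 1$ being at most second-most frequent (\cref{lemma:largeway}); the boundary case $|C|=1$ needs a further argument comparing $S$ with the minimal $2$-good set $S+x-b$ under an extremal choice of $S$ (\cref{lemma:middleway}). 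Feeding the resulting constraints into the LP is what yields $81$ for $s=4$ and bounds exceeding $113$ in every $s=5$ case. None of this is recoverable from the $1$-coordinate bookkeeping in your sketch, so as written the proposal establishes only $45\le m\le 113$ with $t\in\{4,5\}$, not the stated conclusion.
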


In the following sections, we will focus on the case when $\F$ has $f_2(\F)\leq \frac13$ and is in the case that \cref{thm:main} didn't cover. That is, when $|\F|\in [45,113]$ and any of its minimal $2$-good sets $S$ has size $4$ or $5$.

We will further assume that $\varnothing\in \mathcal{F}$, and $1$ is the most abundant element in sets of $\mathcal{F}$.

\subsection*{Paper structure and notations}
In Section 2, we will briefly explain the idea behind the proof. Then, we will prove several lemmas mentioned in the heuristics. After that, two separate cases $|S|=4$ and $|S|=5$ will be done in \cref{sec:s=4} and \cref{sec:s=5}, respectively.

For simplicity, for any set $S$ and element $x$, we use $S+x$ for $S\cup\{x\}$, and $S-x$ for $S\setminus\{x\}$. We will also write, for example, $abd$ for short of $\{a,b,d\}$.

\section{Heuristics behind the proof}
\subsection{Review of the shattering strategy}
Let us first have a brief review of the proof for the case $4\leq |\mathcal{F}|\leq 113$ and $k=2$. 

We considered a minimal $2$-good set $S$ and showed that by the minimality, for any element $y\in S$, there is $F_y\in \mathcal{F}$ that $F_y\cap S=\{y\}$ because if there was no such $F_y$, $S-y$ became a smaller $2$-good set. 

By the union-closed property, for any $T\subseteq S$, we find that $F_T:= \bigcup_{y\in T} F_y$ is in $\mathcal{F}$, and $F_T\cap S=T$. This shows that $|S|\leq \log_2|\mathcal{F}|$ because these $2^{|S|}$ sets $F_T,T\subseteq S$, are surely different. 

Lastly, using $F_T$'s to analyse the \textit{incidence} $\sum_{A\in \mathcal{F}} |A\cap S|$ of $\mathcal{F}$ over $S$ when $f_2(\mathcal{F})\leq \frac13$, we get that either $s=4$ and $m \geq 45$, or $s=5$ and $m\geq 70.5$.

\subsection{Linear programming and two ways of winning}
We can make a more careful estimate: For each subset $T$ of $S$, we define $q_T$ as the quantity of sets $A$ in $\mathcal{F}$ that $S\cap A=T$.
\begin{itemize}
    \item By $S$ being $2$-good, there are at most $2$ sets that do not intersect $S$, namely $\varnothing$ and $\{1\}$. Thus
    \[q_\varnothing\leq 2.\]
    \item By $F_T\cap S=T$, we obtain 
    \[q_T\geq 1 \qquad\text{for each }T\subseteq S.\] 
    \item As $f_2(\mathcal{F})\leq \frac 13$, we obtain 
    \[\sum_{y\in T}q_T\leq \frac m3 \qquad \text{for each }y\in S.\]
\end{itemize}

It turns out that these are all linear constraints for $q_T$'s and $m$, so we can solve it by setting the following linear programming $L_0$:

\begin{description}
    \item[Variables] $m,q_T:T\subseteq S$
    \item[Minimize] $m$
    \item[Subject to]
    \begin{equation*}
    \begin{array}{r@{\,}ll}
        q_\varnothing &\leq 2 ,&\\
         q_T & \geq 1  ,&T\subseteq S\\
        \sum_{T\subseteq S} q_T &= m,&\\
        \sum_{y\in T}q_T & \leq \frac m3 ,& y\in S.\\
    \end{array}
    \end{equation*}
\end{description}

By solving this directly, we get same bounds $m\geq 45$ for $|S|=4$ and $m\geq 70.5$ for $|S|=5$. So what happens when this LP reaches the lower bound? A solution reaching the lower bound of $m$ when $|S|=4$ is
{\linespread{1}
\[q_{T}=\begin{cases}
    2,&T=\varnothing;\\
    8,&|T|=1;\\
    1,&|T|\geq 2.
\end{cases}\]
}
This configuration can cause many dependencies. For example, while there are many pairs of choices of $F_a, F_b$ in the proof, they all result in the unique $F_{ab}:= F_a\cup F_b$! This phenomenon could only happen when each element in $F_{ab}$ is either a common element of $F_a$s or a common element of $F_b$s. In this scenario, elements \textit{outside} $S$ appear in many sets.

Hence, we can strengthen $L_0$ by considering another carefully chosen element $x$. This $x$ allows two ways of winning: Either
\begin{itemize}
    \item this $x$ helps us find different sets (by spotting $x\notin F_T, x\in F_T'$) that $F_{T}\cap S=T=F_T'\cap S$ for big $T$. In this way, it gives a greater lower bound for $q_T$. Or,
    \item $x$ itself lies in many sets. In this way, it gives an extra constraint for $L_0$, as $x$ cannot lie in over $\frac{m}{3}$ sets if $x\neq 1$.
\end{itemize}

\subsection{The choice of $x$ and three lemmas}
We will pick $x\neq 1$ and an $a\in S$ such that there are two sets $F_a,F_a'$ with $F_a\cap (S+x)=a, F_a'\cap (S+x)=ax$. 

For any choice of $F_y, y\in S$, such that $F_y\cap S=\{y\}$, we associate 
\[ F_{T}:= \bigcup_{y\in T} F_y, T\subseteq S.\]

Now we see how the flexibility of choice of $F_a$ helps us: If we can choose $F_y$ for $y\in S$ such that $x\notin F_T$, then $F_T\cup F_a$ and $F_T\cup F_a'$ are two different sets in $\mathcal{F}$, both of which intersect $S$ at $T+a$, thus $q_{T+a}\geq 2$.

We can do much better than this, but we give some definitions before we can precisely state the lemma:
\begin{definition}[flexible, covered]\mbox{}
    For $y\in S$, we say $a$ is $x$-\textit{flexible} if there are two sets $F_y,F_y'$ with $F_y\cap (S+x)=y, F_y'\cap (S+x)=xy$. 

    We say $y$ is \textit{covered} (by $x$) if all choices of $F_y$ contains $x$. Equivalently, $y$ is covered by $x$ iff $S+x-y$ is $2$-good.
\end{definition}

Let $C$ denote the set of covered elements.
\begin{lemma}\label{lemma:smallway}
    Let $a\in S$ be $x$-flexible. Then $q_T\geq 2$ for each $T\subseteq S$ such that
    \begin{itemize}
        \item either $a\in T$ and $T\cap C=\varnothing$, or
        \item $T$ contains at least $2$ elements in $C$. 
    \end{itemize}
\end{lemma}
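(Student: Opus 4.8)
The plan is to read $q_T$ as the number of sets of $\mathcal F$ whose trace on $S$ is exactly $T$, so that proving $q_T\ge 2$ amounts to exhibiting two distinct members of $\mathcal F$ with this trace. The cleanest invariant separating two such sets is membership of $x$: any trace-$T$ set containing $x$ and any trace-$T$ set avoiding $x$ are automatically distinct. So I would aim, in each case, to produce one trace-$T$ set that contains $x$ and one that avoids $x$. Throughout I use the blocks $F_y$ with $F_y\cap S=\{y\}$ guaranteed by minimality, the flexible pair $F_a$ (with $F_a\cap(S+x)=a$, hence $x\notin F_a$) and $F_a'$ (with $F_a'\cap(S+x)=ax$, hence $x\in F_a'$), both of $S$-trace $\{a\}$, and the defining fact that a covered element admits only blocks containing $x$.

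For the first bullet ($a\in T$ and $T\cap C=\varnothing$) this is immediate. Every $y\in T$ is uncovered, so each has a block $F_y$ avoiding $x$; taking $F_a$ at the element $a$, the union $\bigcup_{y\in T}F_y$ is an $x$-avoiding set of trace $T$. Replacing the block at $a$ by $F_a'$ (equivalently, unioning in $F_a'$) produces a set of the same trace $T$ that now contains $x$. These two sets differ in their $x$-membership, so $q_T\ge 2$.

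The second bullet ($|T\cap C|\ge 2$) is the crux. Here the $x$-containing witness is free: any covered element of $T$ already drags $x$ into $\bigcup_{y\in T}F_y$. The difficulty is the $x$-avoiding witness, because covered elements have no $x$-free singleton block, so the naive union fails. My engine would be the equivalence covered $c\iff S+x-c$ is $2$-good, fed into a transversal argument on truncations of $S+x$: if $(S+x)\setminus\{c_1,c_2\}$ is \emph{not} $2$-good for two covered $c_1,c_2\in T$, a witnessing set $B$ satisfies $B\cap(S+x)\subseteq\{c_1,c_2\}$ with $x\notin B$, and since neither $\{c_1\}$ nor $\{c_2\}$ can be the full trace of an $x$-free set (both are covered), $B$ has trace \emph{exactly} $\{c_1,c_2\}$. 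Pairing the covered elements of $T$ in this way and unioning the resulting pair-sets with $x$-free blocks of the uncovered elements of $T$ then yields an $x$-avoiding set of trace exactly $T$ (no spillage, as each pair-set has trace exactly its pair), and pairing it with the $x$-containing witness gives $q_T\ge 2$.

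The step I expect to fight hardest is precisely the transversal hypothesis: although $S\setminus\{c_1,c_2\}$ is never a transversal (by minimality of $S$), adjoining $x$ may restore $2$-goodness of $(S+x)\setminus\{c_1,c_2\}$, which blocks the $x$-avoiding construction and forces every trace-$\{c_1,c_2\}$ set to contain $x$. In that regime I would have to manufacture two distinct $x$-\emph{containing} sets of trace $T$ instead, and this is where the hypothesis $|T\cap C|\ge 2$ and the flexibility of $a$ (which both certifies $x$ as a legitimate choice and supplies the auxiliary set $F_a'$) should be played off against each other---plausibly by using the two covered elements as independent carriers of $x$, or by routing the heavy $x$-forcing into the companion ``second way of winning'' (an extra constraint on the abundance of $x$) rather than into $q_T$. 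Pinning down this forcing sub-case while keeping the trace exactly equal to $T$ is, I anticipate, where essentially all the technical work of the lemma lives.
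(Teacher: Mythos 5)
Your first bullet is handled exactly as in the paper ($F_a$ versus $F_a'$ unioned with $x$-free blocks), and your construction of the $x$-avoiding witness in the second bullet — a set $B=G_{c_1c_2}$ with $B\cap(S+x)=\{c_1,c_2\}$ obtained from the failure of $2$-goodness of $(S+x)\setminus\{c_1,c_2\}$, then unioned over a covering collection of pairs together with $x$-free blocks for $T\setminus C$ — is also the paper's construction. But the step you explicitly defer, namely ruling out that $(S+x)\setminus\{c_1,c_2\}$ is $2$-good for two covered $c_1,c_2$, is not an optional sub-case: it is the one substantive claim the proof must establish, and you leave it as a list of speculative strategies ("manufacture two $x$-containing sets", "route into the second way of winning") without carrying any of them out. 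That is a genuine gap.

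The reason you could not close it locally is that it is not a local fact: the paper closes it using the standing global hypotheses $f_2(\F)\le\frac13$ and $45\le m\le 113$. If $|S|=4$, then $(S+x)\setminus\{c_1,c_2\}$ being $2$-good would exhibit a $2$-good set of size $3$, which already forces $f_2(\F)>\frac13$, a contradiction. If $|S|=5$, its $2$-goodness means every set of trace $\{c_1,c_2\}$ contains $x$; choosing blocks so that $x\in F_y$ for $y\in\{a,c_1,c_2\}$ and counting, $x$ lies in at least $q_{c_1}+q_{c_2}+q_{c_1c_2}+28-3$ sets, and adding the constraint $q_{c_1}+q_{c_2}+q_{c_1c_2}+25\le\frac m3$ to $L_0$ yields $m\ge 129$, contradicting $m\le 113$. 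So your instinct that the forcing sub-case should be "routed into the second way of winning" is exactly right, but it must be executed as a contradiction with the known upper bound on $m$ (so the sub-case simply cannot occur), not as an alternative construction of two $x$-containing sets of trace $T$ — no such construction exists in the paper, and none is needed.
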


On the other hand, if there is a choice of $\{F_y:y\in S\}$ such that $x\in F_T$, then $x\in F_{T'}$ for any $T\subseteq T'\subseteq S$. we can find many sets in $\mathcal{F}$ that contains $x$ if there are many covered elements. It leads to the second lemma:

\begin{lemma}\label{lemma:largeway}
    Let $a\in S$ be $x$-flexible. and $x$ covers the elements in $C$. Then 
    \[(2^{s}-2^{s-1-|C|}) + \sum_{c\in C} q_c - |C| \leq \frac{m}{3}.\]
\end{lemma}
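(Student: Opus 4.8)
The plan is to bound from below the number of sets of $\mathcal{F}$ that contain $x$, and then invoke that $x$ cannot be too frequent. Since $x \neq 1$ and $1$ is the most abundant element, $x$ is at most the second most frequent element, so by $f_2(\mathcal{F}) \le \frac13$ the number of sets of $\mathcal{F}$ containing $x$ is at most $\frac{m}{3}$. It therefore suffices to exhibit at least $(2^{s} - 2^{s-1-|C|}) + \sum_{c \in C} q_c - |C|$ sets of $\mathcal{F}$ containing $x$, where $s = |S|$.

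To count such sets I would partition them according to their trace $T = A \cap S$. For a fixed $T \subseteq S$, write $q_T^x$ for the number of $A \in \mathcal{F}$ with $A \cap S = T$ and $x \in A$, so the total count is $\sum_{T \subseteq S} q_T^x$. The first step is to show $q_T^x \ge 1$ whenever $T$ contains $a$ or contains at least one covered element. Indeed, if some $c \in T \cap C$ then, taking a covering $F_c$ (which necessarily contains $x$) together with any $F_y$ for the remaining $y \in T$, the union $F_T = \bigcup_{y \in T} F_y$ satisfies $F_T \cap S = T$ and $x \in F_T$; and if $a \in T$ then replacing $F_a$ by the witness $F_a'$ from $x$-flexibility (with $F_a' \cap (S+x) = \{a,x\}$) yields a set intersecting $S$ in $T$ and containing $x$. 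Because $a$ is $x$-flexible it is not covered, so $a \notin C$; hence the traces $T$ avoiding both $a$ and all of $C$ are exactly the subsets of $S \setminus (\{a\} \cup C)$, of which there are $2^{s-1-|C|}$, and the number of $T$ contributing at least $1$ is $2^{s} - 2^{s-1-|C|}$.

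The second step sharpens the estimate on the covered singletons. For each $c \in C$ the definition of covered forces every set $A$ with $A \cap S = \{c\}$ to contain $x$; there are $q_c$ such sets, so in fact $q_{\{c\}}^x = q_c$ rather than merely $1$. Replacing the contribution $1$ by $q_c$ for each of the $|C|$ covered singletons adds $\sum_{c \in C}(q_c - 1) = \sum_{c \in C} q_c - |C|$ to the previous total. Since the classes indexed by distinct $T$ are disjoint, summing gives
\[ \sum_{T \subseteq S} q_T^x \;\ge\; \bigl(2^{s} - 2^{s-1-|C|}\bigr) + \sum_{c \in C} q_c - |C|, \]
and combining this with $\sum_{T \subseteq S} q_T^x \le \frac{m}{3}$ yields the stated inequality.

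I expect the only delicate point to be the bookkeeping: one must verify that the covered singletons, counted with multiplicity $q_c$ in the second step, are precisely those already counted with multiplicity $1$ in the first step, so that the increment is exactly $q_c - 1$ and nothing is double counted, and that $a \notin C$ so that the exponent $s-1-|C|$ is correct. Both facts follow immediately from the definitions of \emph{covered} and $x$-\emph{flexible}, so the argument reduces to a careful inclusion count.
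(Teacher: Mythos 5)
Your proposal is correct and follows essentially the same approach as the paper: both arguments lower-bound the number of sets containing $x$ by $2^{s}-2^{s-1-|C|}$ traces meeting $\{a\}\cup C$ (witnessed via the flexible $F_a'$ and the covered $F_c$'s) plus the full count $q_c$ on each covered singleton, correcting for the $|C|$-fold overlap, and then apply the frequency bound $\frac{m}{3}$ for $x\neq 1$. Your bookkeeping via the quantities $q_T^x$ is just a reorganization of the paper's inclusion--exclusion, and you correctly verify the implicit facts $a\notin C$ and that the covered singletons are the only doubly counted classes.
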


These lemmas handle cases where $|C|$ is small and large well. For $|C|=1$, it turns out that we can benefit from assuming $S$ has the best incidence among all minimal $2$-good $s$-sets of $\mathcal{F}$.

\begin{lemma}\label{lemma:middleway}
    Let $a\in S$ be $x$-flexible. Suppose that $S$ is a minimal $2$-good set which maximizes the incidence $\sum_{A\in \mathcal{F}} |A\cap S|$ among all minimal $2$-good $s$-sets of $\mathcal{F}$. Suppose that $x$ only covers $b\in S$. Then 
    \begin{itemize}
        \item There are $2^{|S|-2}$ sets in $\mathcal{F}$ that intersects $S$ at $\geq 2$ elements, contains $b$ but not $x$.
        \item There are $2^{|S|-2}-1$ sets in $\mathcal{F}$ that intersects $S$ at $\geq 3$ elements, contains $b$ but not $x$.
        \item There are $2^{|S|-3}-1$ sets in $\mathcal{F}$ that intersects $S$ at $\geq 4$ elements, contains $b$ but not $x$.
    \end{itemize}
\end{lemma}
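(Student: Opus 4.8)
Write $s=|S|$. The plan is to read the three bullets as lower bounds on a single family of sets and then build that family. First I would record the two facts that the hypothesis $C=\{b\}$ gives us: since $b$ is covered, every set meeting $S$ only in $\{b\}$ contains $x$, so any $A$ with $b\in A$, $x\notin A$ (automatically $A\neq\varnothing,\{1\}$, as $b\neq 1$) must satisfy $|A\cap S|\geq 2$; and since every $y\in S-b$ is \emph{not} covered, each such $y$ admits an $x$-free witness $F_y$ with $F_y\cap(S+x)=\{y\}$. With $N_{\geq j}$ denoting the number of $x$-free sets containing $b$ whose trace on $S$ has size $\geq j$, the three bullets become exactly $N_{\geq 2}\geq 2^{s-2}$, $N_{\geq 3}\geq 2^{s-2}-1$, and $N_{\geq 4}\geq 2^{s-3}-1$.

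The second step extracts a degree inequality from the maximality hypothesis. I would consider the swapped set $S':=S+x-b$. It is $2$-good because $b$ is covered, and I claim it is a \emph{minimal} $2$-good $s$-set: for each $y\in S-b$ the $x$-free $F_y$ meets $S'$ only in $\{y\}$, and for $x$ any set meeting $S$ exactly in $\{b\}$ (one exists by minimality of $S$) is forced to contain $x$ and hence meets $S'$ only in $\{x\}$; these private sets show no element of $S'$ is redundant. Since $S$ maximizes the incidence $\sum_{A\in\mathcal{F}}|A\cap S|$ among minimal $2$-good $s$-sets and this incidence exceeds that of $S'$ by exactly $\deg(b)-\deg(x)$, we conclude $\deg(b)\geq\deg(x)$, i.e.\ there are at least as many sets containing $b$ but not $x$ as containing $x$ but not $b$.

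For bullet (i) I would exploit the flexibility of $a$: using the $x$-containing witness $F_a'$ together with the $x$-free $F_y$'s, for each $T\subseteq S-a-b$ the union $F_T\cup F_a'$ lies in $\mathcal{F}$, contains $x$, avoids $b$, and meets $S$ precisely in $T+a$. These $2^{s-2}$ sets are pairwise distinct, so there are at least $2^{s-2}$ sets containing $x$ but not $b$; combined with the degree inequality this gives $N_{\geq 2}\geq 2^{s-2}$.

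For bullets (ii) and (iii) I would transfer this count into the $x$-free, $b$-containing family while tracking trace sizes. Fix a set $H$ that is $x$-free, contains $b$, and has $|H\cap S|$ minimal; I expect the maximality of $S$ to force $H\cap S$ to be a two-element set $\{b,c\}$. Then for each $U\subseteq S-b$ with $a\in U$ the union $\bigl(\bigcup_{y\in U}F_y\bigr)\cup H$ is $x$-free, contains $b$, and meets $S$ in a set containing $U+b$; sorting these by $|U|$ should yield the required $2^{s-2}-1$ sets of trace size $\geq 3$ and $2^{s-3}-1$ of trace size $\geq 4$ once the small-trace cases are discarded. The hard part will be exactly this size bookkeeping: unioning with $H$ can enlarge the trace past $U+b$ and can make two different $U$'s produce the same trace, so I must argue—using the minimality of $H$, the lower bounds $q_T\geq 2$ from \cref{lemma:smallway} for $a\in T$, $b\notin T$, and the flexibility of $a$—that enough of the constructed sets remain distinct and land at trace size $\geq 3$ and $\geq 4$. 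This is where the incidence-maximality hypothesis does work beyond the single degree inequality, and it is the source of the $-1$ corrections in the stated bounds.
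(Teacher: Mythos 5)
Your first half is exactly the paper's argument: you correctly identify that $S+x-b$ is a minimal $2$-good $s$-set (witnessed by the $x$-free sets $F_y$ for $y\in S-b$ and by the forced $x$ in any set with trace $\{b\}$), extract $\deg(b)\ge\deg(x)$ from incidence-maximality, produce $2^{s-2}$ distinct sets containing $x$ but not $b$ via the unions $F_T\cup F_a'$, and conclude bullet (i) from the observation that a $b$-containing, $x$-free set must have trace $\ge 2$ on $S$ because $b$ is covered. All of that is sound and matches the paper.

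The gap is in bullets (ii) and (iii), and you have flagged it yourself without closing it. There are two problems. First, nothing forces the minimal trace $|H\cap S|$ to equal $2$; incidence-maximality plays no further role here (in the paper it is used only for the degree inequality), and the proof must simply split into the cases $|H\cap S|=2,3,\ge 4$, the case $|H\cap S|=1$ being impossible since $b$ is covered. The cases $|H\cap S|\ge 3$ are easier, not harder, but they cannot be waved away. Second, your indexing by $U\subseteq S-b$ with $a\in U$ genuinely produces collisions: if $H\cap S=\{b,c\}$ with $c\ne a$, then $U$ and $U+c$ yield unions with the same trace $U\cup\{b,c\}$, so you cannot certify $2^{s-2}$ distinct sets this way, and appealing to the $q_T\ge 2$ bounds of \cref{lemma:smallway} does not help, since those concern sets with trace exactly $T$ rather than the distinctness of these particular unions. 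The fix is to index instead by $T\subseteq S\setminus(H\cap S)$: the unions $H\cup\bigcup_{y\in T}F_y$ are then $x$-free, contain $b$, and have pairwise distinct traces $(H\cap S)\cup T$ of size $|H\cap S|+|T|$, so counting by $|T|$ in the case $|H\cap S|=2$ gives $2^{s-2}$ sets of trace $\ge 2$, $2^{s-2}-1$ of trace $\ge 3$, and $2^{s-2}-1-(s-2)\ge 2^{s-3}-1$ of trace $\ge 4$, with the remaining cases immediate or handled analogously. This is precisely what the paper does.
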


In the next section, we will justify the choice of $a,x$ and prove these three lemmas. Remind that these lemmas are all under the assumption $f_2(\mathcal{F})\leq \frac13$.

To utilise \cref{lemma:middleway}, we will assume that $S$ maximises the incidence among the minimal $2$-good sets with the corresponding size. 

We will separate the analysis regarding the size of $C$. We obtain a bound for $m$ by first obtaining extra constraints for $q_T$s using \cref{lemma:smallway,lemma:largeway,lemma:middleway}, then solve the linear program $L_0$ with these extra constraints. Detailed LP calculations are available in the companion Git repository\footnote{\url{https://github.com/haur576/k-Union-Closed-Sets-Conjecture}}.

A summary of the bounds for $m$ is shown in \cref{fig:summary}. One can find that this is sufficient to reach a contradiction unless $s=4$ and $|C|=0,1$.
\begin{figure}[htbp!]
    \centering
    \[\begin{array}{c|cccc}
        \text{Bound for $m$ if $f_2(\mathcal{F})\leq \frac 13$} & |C|=0 &|C|=1 & |C|=2 & |C|\geq 3\\\hline
        s=4 & \textcolor{red}{m\geq 81} & \textcolor{red}{m\geq 81} & m\geq  114 & \text{infeasible}\\
        s=5 & m\geq 118.5 & m\geq 115.5 & m\geq 122 & m\geq 114\\
        \text{Using Lemma...} &\text{\labelcref{lemma:smallway}}&\text{\labelcref{lemma:smallway,lemma:middleway}}&\text{\labelcref{lemma:smallway,lemma:largeway}}&\text{\labelcref{lemma:largeway}}
    \end{array}\]
    \caption{Summary of bound found in cases}
    \label{fig:summary}
\end{figure}

\section{Proofs of the lemmas}
First of all, we show that there is always a choice of $a$ and $x$. 
\begin{lemma}
    There is $a\in S$ and $x\notin S\cup\{1\}$ such that $a$ is $x$-flexible.
\end{lemma}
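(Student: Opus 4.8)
The plan is to argue by contradiction: suppose that for every $a\in S$ and every $x\notin S\cup\{1\}$ the element $a$ fails to be $x$-flexible, and derive a contradiction with the standing hypotheses that $\F$ is union-closed with $\varnothing\in\F$, that $1$ is the most abundant element, that $f_2(\F)\le\frac13$, and that $\F$ is not a near-$2$-cube. Write $R:=\bigl(\bigcup_{F\in\F}F\bigr)\setminus(S\cup\{1\})$ for the elements eligible to play the role of $x$, and for $a\in S$ let $\F_a:=\{F\in\F:F\cap S=\{a\}\}$. By minimality of $S$ each $\F_a$ is nonempty, since the witnessing set $F_a$ exists.

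First I would unwind the failure of flexibility into a rigidity statement. For fixed $a$, the element $a$ is $x$-flexible exactly when some member of $\F_a$ contains $x$ while another avoids it; so if $a$ is never flexible then, for each $x\in R$, the sets of $\F_a$ either all contain $x$ or all avoid it. Hence every member of $\F_a$ has the same trace on $R$, a fixed set $R_a$, and can differ only in whether it contains $1$; thus $\F_a\subseteq\{\,\{a\}\cup R_a,\ \{a\}\cup R_a\cup\{1\}\,\}$, and $R_a$ is precisely the set of $x$ that cover $a$. Taking $\bigcup\F_a\in\F$ by union-closedness recovers the maximal such set, so this description is exact.

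Next I would split on whether some $R_a$ is nonempty. If $x\in R_a$, then $x$ covers $a$, and I would compare the frequency of $x$ against that of $1$: using the associated sets $F_T=\bigcup_{y\in T}F_y$ together with union-closedness and the rigid attachments, one shows that such an $x$ occurs at least as often as $1$, which together with $\varnothing\in\F$ contradicts $1$ being the most abundant element (this is exactly what happens in the toy family in which all $R_a$ equal a common nonempty set). The remaining, and genuinely harder, case is $R_a=\varnothing$ for all $a$: then no element of $R$ covers any element of $S$, so every $x\in R$ appears only in sets meeting $S$ in at least two points, while the singleton fibres $\F_a$ live inside $S\cup\{1\}$. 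Here I would combine $q_\varnothing\le 2$, the lower bounds $q_T\ge 1$, union-closedness, and $f_2(\F)\le\frac13$ to pin the trace of $\F$ on $S\cup\{1\}$ to that of a near-$2$-cube, contradicting the hypothesis that $\F$ is not a near-$2$-cube.

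The main obstacle is this last case: ruling out a union-closed family whose singleton $S$-fibres are rigid yet whose higher fibres secretly distribute the elements of $R$ without ever producing a flexible pair. I expect to control it by showing that such a distribution forces the second-most-frequent element below $\frac{m}{3}$ only at the near-$2$-cube configuration, so that any deviation either creates an over-frequent element (again contradicting $1$ most abundant or $f_2(\F)\le\frac13$) or reintroduces a flexible pair. The first case, by contrast, should be routine once the frequency comparison is set up.
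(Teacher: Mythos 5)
Your opening move is the right one, and it is essentially the contrapositive of the paper's argument: if no $a\in S$ is $x$-flexible for any $x\notin S\cup\{1\}$, then every fibre $\F_a=\{F\in\F: F\cap S=\{a\}\}$ has a constant trace $R_a$ outside $S\cup\{1\}$, so $\F_a\subseteq\{\{a\}\cup R_a,\ \{a\}\cup R_a\cup\{1\}\}$ and hence $q_{\{a\}}=|\F_a|\le 2$ for every $a\in S$. But at that point you have already won, and you do not notice it. The constraints of $L_0$ (namely $q_\varnothing\le 2$, $q_T\ge 1$ for all $T\subseteq S$, and $\sum_{T\ni y}q_T\le m/3$ for each $y\in S$) force $q_{\{a\}}\ge 8$ for some $a\in S$ --- for $s=4$ one gets $q_{\{y\}}\ge 8$ for every $y$ by summing the frequency constraints over the other three elements, and for $s=5$ one gets $\sum_{a\in S}q_{\{a\}}\ge 40$ and applies pigeonhole. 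This is exactly the paper's proof: $|\F_a|\ge 8\ge 3$ means two of its members differ at some element other than $1$, which is the flexible pair. Your rigidity statement contradicts this bound outright, with no case distinction on the $R_a$.

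Because you miss that finishing step, the rest of your plan is a genuine gap rather than an alternative route. The claim in your first case that an $x$ covering $a$ ``occurs at least as often as $1$'' is not justified and I do not see how to prove it: covering only constrains the sets whose $S$-trace is $\{a\}$, and says nothing about how often $1$ appears elsewhere (the paper's frequency comparison in \cref{lemma:middleway} is between $x$ and an element of $S$, needs $S+x-b$ to be a \emph{minimal} $2$-good set, and needs the incidence-maximality assumption --- none of which you have set up here). Your ``genuinely harder'' case, $R_a=\varnothing$ for all $a$, is left as a hope (``I expect to control it\ldots''), and the proposed target --- pinning the trace of $\F$ on $S\cup\{1\}$ to a near-$2$-cube --- is not the right invariant: in that case you simply have $\F_a\subseteq\{\{a\},\{a,1\}\}$, so again $q_{\{a\}}\le 2$, and the LP bound $q_{\{a\}}\ge 8$ closes it immediately. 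In short: keep your first paragraph, derive the quantitative lower bound on the singleton fibres from $L_0$, and delete the case analysis.
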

\begin{proof}
    By the constraints of $L_0$, we find that $\sum_{a\in S}q_a\geq 40$ for $|S|=5$ and $q_y\geq 8$ for each $y\in S, |S|=4$. Therefore, there is $a\in S$ such that $q_a\geq 8$. So $\{F\in \mathcal{F}| F\cap S=\{a\}\}$ has size $\geq 8$.

    We pick any two sets in this family that differ not only at $1$. Then there is $x \neq 1$ such that $x$ lies in one set but not the other. These two are desired $F_a'$ and $F_a$, respectively.
\end{proof} 

Now, we prove three lemmas introduced in the previous section. We prove \cref{lemma:largeway} first:
\begin{proof}[proof of \cref{lemma:largeway}]
    This lemma is by simple counting: For each $y\in S$, we choose $F_y$ such that $x\in F_y$ if $y=a$ or $y\in C$, and $\notin F_y$ otherwise. By the choice and the definition of covering, $x$ lies in
    \begin{itemize}
        \item $F_T$, where $T$ contains $a$ or an element in $C$, and
        \item any $F\in \mathcal{F}$ such that $F\cap S=\{c\}$ for some $c\in C$.
    \end{itemize}
    
    There are $2^{|S|}-2^{|S|-1-|C|}$ sets from the first case, $\sum_{c\in C}q_c$ sets from the second case and $|C|$ sets from both. As $x$ cannot lie in more than $\frac{m}{3}$ sets, the result follows from the Inclusion-Exclusion Principle. 
\end{proof}

\begin{proof}[proof of \cref{lemma:smallway}]
    The first case is direct: By the definition of non-covered elements, we can pick $F_y$ for $y\notin C$ such that $x\notin F_y$. If $a\in T$ and $T\cap C=\varnothing$, consider $F_T=\bigcup_{y\in T} F_y$ and $F_T'=F_a'\cup F_T$. These two are different as $x\notin F_T, x\in F_T'$.

    For the second case, there is always an $F$ that $F\cap S=T$ with $x\in F$, and we will find another which $x\notin F$. We first show that for any $b,c\in C$, $S+x-b-c$ is \textit{not} $2$-good.
    
    Suppose conversely that $S+x-b-c$ is $2$-good. If $|S|=4$, then it means that there is a $2$-good set with size $3$, which implies $f_2(\mathcal{F})>\frac 13$. Hence we consider $|S|=5$. 
    
    Notice that now any set $F\in \mathcal{F}$ that $F\cap S=\{b,c\}$ also contains $x$. By the same counting method of proving \cref{lemma:largeway}, $x$ lies in $q_b+q_c+q_{bc}+28-3$ sets. Consider the LP $L_0$ along with $q_b+q_c+q_{bc}+28-3\leq \frac{m}{3}$, we get $m\geq 129$, contradicting to the assumption.

    For any $2$ elements $b,c\in C$, since $S+x-b-c$ is not $2$-good, there is a set $G_{bc}\neq \varnothing,\{1\}$ that does not intersect $S+x-b-c$. However, since $S+x-b$ and $S+x-c$ are $2$-good, both $G_{bc}\cap (S+x-b)$ and $G_{bc}\cap (S+x-c)$ are nonempty. This implies $G_{bc}\cap (S+x)=bc$.

    Now for $T$ that contains $2$ or more elements in $C$, we consider $F_T'$ to be the union of
    \begin{itemize}
        \item $G_{bc}$ for each pair $b,c$ of elements in $T\cap C$, and 
        \item $F_y$ for $y\in T\setminus C$ with $x\notin F_y$.
    \end{itemize}
    Then $F'_T\cap S=T$ and $x\notin F_T$, as desired.
\end{proof}
Lastly, we prove \cref{lemma:middleway}.
\begin{proof}[proof of \cref{lemma:middleway}]
    We first show that there are $2^{|S|-2}$ sets in $\mathcal{F}$ containing $b$ but not $x$.
    
    As $x$ only covers $b$, we may choose $F_y$ such that $x\in F_a, x\in F_b$ and $x\notin F_y$ for $y\neq a,b$. Then there are $2^{|S|-2}$ resulting $F_T$ in $\mathcal{F}$ that contains $x$ but not $b$, namely those with $a\in T, b\notin T$. 
    
    Note that $S+x-b$ is a $2$-good set and is minimal. Indeed, for each $y\neq b$ in $S$, $S+x-y$ is not $2$-good, thus $S+x-b-y$ is not $2$-good. (Also, $S+x-b-x=S-b$ is not $2$-good.)
    
    As $S$ maximises the incidence among minimal $2$-good $s$-sets, $S+x-b$ has no larger incidence than $S$. Thus, the frequency of $b$ is at least the one of $x$. That is, there are at least $2^{|S|-2}$ sets which contains $b$ but not $x$.

    In the following we will show that in these $2^{|S|-2}$ sets which contains $b$ but not $x$,
    \begin{itemize}
        \item $2^{|S|-2}$ of them intersect $S$ at $\geq 2$ elements,
        \item $2^{|S|-2}-1$ of them intersect $S$ at $\geq 3$, and
        \item $2^{|S|-3}-1$ of them intersect $S$ at $\geq 4$.
    \end{itemize}
    Let us call the other $|S|-2$ elements $c,d$ and possibly $e$ if $s=5$. for the choice of $\{F_y\}$, we choose $F_y$ such that $x\notin F_y$ for each $y\neq b$.
    
    Among these $2^{|S|-2}$ sets, we pick one $F$ with minimised intersection size of intersection with $S$. 
    \begin{itemize}
        \item If $|F\cap S|=1$, then $F\cap S=\{b\}$ but $F$ does not contain $x$. It contradicts that $b$ is covered.
        \item If $|F\cap S|=2$, suppose that $F\cap S = ab$ (the case for $bc,bd,be$ are similar). We consider the union
        \[G_T:= F\cup \bigcup_{y\in T} F_y \in \mathcal{F}\]
        for any $T$ that does not contain $a$ nor $b$. These $2^{|S|-2}$ sets contain $b$ but not $x$. Also,
        \begin{itemize}
        \item $2^{|S|-2}$ of them intersect $S$ at $\geq 2$ elements,
        \item $2^{|S|-2}-1$ of them intersect $S$ at $\geq 3$ elements, and
        \item $2^{|S|-2}-1-(|S|-2) \geq 2^{|S|-3}-1$ of them intersect $S$ at $\geq 4$ elements.
        \end{itemize}
        Thus, they meet the conditions. \footnote{For the inequality, we used the fact that $2^{|S|-2}-1-(|S|-2) \geq 2^{|S|-3}-1$ when $|S|\geq 4$.}
        
        \item $|F\cap S|=3$: Then all these $2^{|S|-2}$ sets intersect $S$ at $\geq 3$ elements, satisfying the first two requirements. Assume that $F\cap S= abc$ (the cases for $abd, bcd, abe, bce, bde$ are similar). We consider the union
        \[F_T':=F\cup \bigcup_{y\in T} F_y \in \mathcal{F}\]
        for any $T$ does not contain $a,b,c$. These $2^{|S|-3}$ sets contain $b$ but not $x$, and $2^{|S|-3}-1$ of them intersect $S$ at $\geq 4$ elements. Thus, they meet the conditions.
        \item $|F\cap S|\geq 4$: Then all $2^{|S|-2}$ sets meet $S$ at $\geq 4$ elements, satisfying these three conditions.
    \end{itemize}
\end{proof}

\section{$s=4$}\label{sec:s=4}
Now we assume that $S=\{a,b,c,d\}$ and $x,ax\in \mathcal{F}_{S+x}$.
\subsection{When $|C|=0$}
    
    From \cref{lemma:smallway} we have
    \[q_T\geq 2 \text{ for any } a\in T\subseteq S.\] 
    
    Solving the LP $L_0$ along with this additional constraint gives $m\geq 81$.
    
\subsection{When $|C|=1$}
    Assume that $b$ is covered.

    From \cref{lemma:smallway} we have
    \[q_T\geq 2 \text{ for } T =a, ac,ad,acd.\] 
    
    From \cref{lemma:middleway} we have the following inequalities:
    \begin{corollary}\mbox{}
        \begin{equation}
            \begin{aligned}
                \sum_{b\in T\subseteq S, |T|\geq 2} q_T &\geq 7+4,\\
                \sum_{b\in T\subseteq S, |T|\geq 3} q_T &\geq 4+3, \text{ and }\\
                \sum_{b\in T\subseteq S, |T|\geq 4} q_T &\geq 1+1.
            \end{aligned}
        \end{equation}
        \begin{proof}
            We count $F\in \mathcal{F}$ for those that contain both $b$ and $x$ and those that contain only $b$ but no $x$.

            By choosing $\{F_y\}$ such that $x\in F_b$ and $x\notin F_y$ for other $y\neq b$, we find that there are $7,4,1$ $F_T$'s  with $|F_T\cap S|\geq 2,3,4$, respectively, each of which contains both $b$ and $x$.
            
            From \cref{lemma:middleway}, there are $4,3,1$ $F$'s with $|F\cap S|\geq 2,3,4$, respectively, each of which contains $b$ but not $x$.
        \end{proof}
    \end{corollary}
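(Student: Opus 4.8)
The plan is to read the three left-hand sides as counts of genuine members of $\mathcal{F}$. By the definition of $q_T$, the sum $\sum_{b\in T\subseteq S,\,|T|\geq j}q_T$ is exactly the number of sets $A\in\mathcal{F}$ with $b\in A\cap S$ and $|A\cap S|\geq j$, for $j=2,3,4$. To bound each of these, I would split such sets according to whether or not they contain $x$, produce enough sets of each kind, and then add the two contributions, which are automatically disjoint precisely because one family contains $x$ and the other does not.

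For the sets containing both $b$ and $x$, I would fix a choice $\{F_y:y\in S\}$ with $x\notin F_y$ for every $y\neq b$; this is possible since $b$ is the only covered element, and the same fact forces $x\in F_b$. Then each $F_T=\bigcup_{y\in T}F_y$ lies in $\mathcal{F}$, satisfies $F_T\cap S=T$, and contains $x$ if and only if $b\in T$. Counting the subsets $T\ni b$ of $S$ with $|T|\geq 2,3,4$ gives $7,4,1$ such sets respectively; they are pairwise distinct because their traces $F_T\cap S=T$ differ, and each contains both $b$ and $x$ while meeting $S$ in $|T|$ points.

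For the sets containing $b$ but not $x$, I would invoke \cref{lemma:middleway} verbatim: specialised to $|S|=4$ it yields $2^{2}=4$, $2^{2}-1=3$, and $2^{1}-1=1$ sets meeting $S$ in at least $2,3,4$ elements respectively, each containing $b$ but not $x$. Since $b\in S$, every such set has $b\in A\cap S$, and the size thresholds hold by construction.

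Adding the two disjoint tallies then gives at least $7+4$, $4+3$, and $1+1$ sets $A$ with $b\in A\cap S$ and $|A\cap S|\geq 2,3,4$, which is exactly the claimed bound for each sum. The only delicate point is the bookkeeping---confirming disjointness of the two families and that every counted set genuinely satisfies $b\in A\cap S$ with the correct intersection size---but both follow at once from the two constructions; the substantive work has already been absorbed into \cref{lemma:middleway}, so no real obstacle remains here.
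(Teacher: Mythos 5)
Your proposal is correct and follows essentially the same route as the paper: split the sets with $b\in A\cap S$ by whether they contain $x$, count $7,4,1$ sets of the form $F_T$ (with $x\in F_b$ forced by coveredness and $x\notin F_y$ otherwise) for the first class, and invoke \cref{lemma:middleway} with $|S|=4$ to get $4,3,1$ sets for the second. The disjointness and distinctness checks you flag are exactly the implicit bookkeeping in the paper's argument.
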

    
    Solving $L_0$ with these $4+3$ extra constraints gives $m\geq 81$.

\subsection{When $|C|=2$}
    Assume that $b,c$ are covered. 
    
    From \cref{lemma:smallway} we get 
    \[q_T \geq 2 \text{ for } T=a, ad, bc, bcd, abc, abcd.\] 
    
    From \cref{lemma:largeway} we get $12 + q_b+q_c \leq \frac{m}{3}$.

    Solving $L_0$ with these two additional constraints gives $m\geq 114$.

\subsection{When $|C|\geq 3$}
    Assume that $b,c,d$ are covered. 
    
    From \cref{lemma:largeway} we get $12 + q_b+q_c+q_d \leq \frac m3$. 
    
    The linear program $L_0$ with this constraint is infeasible.
    
\section{$s=5$}\label{sec:s=5}
Now we assume that $S=\{a,b,c,d,e\}$ and $x,ax\in \mathcal{F}_{S+x}$.
\subsection{When $|C|=0$}

    From \cref{lemma:smallway} we get
    \[q_T\geq 2 \text{ for any } a\in T\subseteq S.\]

    Solving the LP $L_0$ along with this additional constraint gives $m\geq 118.5$.

\subsection{When $|C|=1$}
    We assume $b\in C$.
    
    From \cref{lemma:smallway} we get 
    \[q_T\geq 2 \text{ for any } T\subseteq S \text{ such that }a\in T,b\notin T.\] 

    By \cref{lemma:middleway} we have the following inequalities:
    \begin{corollary}
        \begin{equation}
            \begin{aligned}
                \sum_{b\in T\subseteq S, |T|\geq 2} q_T &\geq 15+8,\\
                \sum_{b\in T\subseteq S, |T|\geq 3} q_T &\geq 11+7, \text{ and }\\
                \sum_{b\in T\subseteq S, |T|\geq 4} q_T &\geq 5+3.
            \end{aligned}
        \end{equation}
        \begin{proof}
            We count $F\in \mathcal{F}$ for those that contain both $b$ and $x$ and those that contain only $b$ but no $x$.

            By choosing $\{F_y\}$ such that $x\in F_b$ and $x\notin F_y$ for other $y\neq b$, we find that there are $15,11,5$ $F_T$'s  with $|F_T\cap S|\geq 2,3,4$, respectively, each of which contains both $b$ and $x$.

            From \cref{lemma:middleway}, there are $8,7,3$ $F$'s with $|F\cap S|\geq 2,3,4$, respectively, that contains $b$ but not $x$.
        \end{proof}
    \end{corollary}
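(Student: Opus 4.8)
The plan is to read each left-hand side combinatorially and then split the relevant family into two disjoint pieces according to whether a set contains $x$. Observe that $\sum_{b\in T\subseteq S,\,|T|\geq j} q_T$ is exactly the number of sets $A\in\F$ with $b\in A\cap S$ and $|A\cap S|\geq j$, for $j=2,3,4$. I would partition this collection into those $A$ that also contain $x$ and those that do not. Since containing $x$ and avoiding $x$ are mutually exclusive, any lower bounds obtained for the two parts simply add, and each part is a subfamily of what the left-hand side counts; so it suffices to bound each part below.

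For the part containing both $b$ and $x$, I would produce explicit sets via the $F_T$ construction. Choose $F_y$ (with $F_y\cap S=\{y\}$) so that $x\in F_b$ and $x\notin F_y$ for every $y\neq b$; this is legitimate because $b$ is covered (so every admissible $F_b$ contains $x$), because $a$ is $x$-flexible (so $F_a$ may be taken to avoid $x$), and because every other element of $S$ is uncovered, as $|C|=1$. With this single choice, $F_T=\bigcup_{y\in T}F_y$ satisfies $F_T\cap S=T$ and contains $x$ precisely when $b\in T$. Counting the subsets $T\subseteq S$ with $b\in T$ and $|T|\geq 2,3,4$ — equivalently, adjoining $b$ to a subset of $\{a,c,d,e\}$ of size $\geq 1,2,3$ — gives $2^4-1=15$, $2^4-1-4=11$, and $\binom{4}{3}+\binom{4}{4}=5$ distinct such sets respectively, each meeting $S$ in $\geq 2,3,4$ elements and containing both $b$ and $x$.

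For the part containing $b$ but not $x$, I would invoke \cref{lemma:middleway} directly: with $s=5$ its three conclusions read $2^{3}=8$, $2^{3}-1=7$, and $2^{2}-1=3$, supplying sets that meet $S$ in $\geq 2,3,4$ elements and contain $b$ while avoiding $x$. Adding the two disjoint counts yields $15+8$, $11+7$, and $5+3$, which are exactly the three claimed bounds.

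The only real subtlety — and the step I would check most carefully — is the legitimacy and disjointness of the two constructions. One must confirm that the chosen $\{F_y\}$ really forces $x\in F_T\iff b\in T$ (this is where the covering of $b$ and the flexibility of $a$ are both used), and that the sets furnished by \cref{lemma:middleway} are exactly those satisfying the complementary hypothesis $x\notin A$, so that nothing is double-counted across the two parts. Everything else reduces to the elementary subset count on the four non-$b$ elements of $S$.
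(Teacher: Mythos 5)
Your proposal is correct and follows essentially the same route as the paper: choose $\{F_y\}$ with $x\in F_b$ and $x\notin F_y$ for $y\neq b$ to produce the $15,11,5$ sets containing both $b$ and $x$, then add the $8,7,3$ sets containing $b$ but not $x$ supplied by \cref{lemma:middleway}, the two families being disjoint since membership of $x$ separates them. The extra care you take in verifying that $x\in F_T\iff b\in T$ and in counting subsets of the four non-$b$ elements is consistent with, and slightly more explicit than, the paper's argument.
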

    
    Consider $L_0$ with these $8+3$ extra constraints, and we get $m \geq 115.5$.
    
\subsection{When $|C|=2$}
    We assume $b,c\in C$. 
    
    From \cref{lemma:smallway} we get 
    \[q_T\geq 2 \text{ for } T=a, ad, ae, ade, bc, bcd, bce, bcde, abc, abcd, abce, abcde.\]

    From \cref{lemma:largeway} we get $26 + q_b+q_c \leq \frac{m}{3}$.
    
    Solving $L_0$ with these two types of additional constraints gives $m\geq 122$.
        
\subsection{When $|C|\geq 3$}
    Assume $b,c,d\in C$ are covered.

    From \cref{lemma:largeway} we get $30 + q_b+q_c+q_d-3 \leq \frac{m}{3}$ or $30 + q_b+q_c+q_d+q_e-4 \leq \frac{m}{3}$. Either we get $30 + q_b+q_c+q_d-3 \leq \frac{m}{3}$.

    Solving $L_0$ with these additional constraints gives us $m\geq 114$.

In conclusion, the case $s=5$ is completely solved.

\section{Concluding remark}

In this paper, we further investigated the potential counterexample for \cref{conj:Nagel} and narrowed down the searching range of their sizes from $[45,113]$ to $[81,113]$ and showed that there is some uniformity to the collection of minimal $2$-good sets. 

To further improve the bound, a possible approach is to pick $x$ more carefully. In our proof, we can see that there are many choices for extra elements. With such many choices, one may possibly push the bound further.

Another perspective is to investigate the uniformity of minimal $2$-good sets of the family. To do this, one could answer this question:

\begin{question}
    What can we say about the minimal $2$-good sets if they all have the same size?
\end{question}

To first ignore the asymmetry induced by the most common element $1$, we consider a definition slightly different from $k$-good sets.
\begin{definition}
    Let $\mathcal{F}$ be a family, a \textbf{cover} $S$ of $\mathcal{F}$ is a set that intersects every set in $\mathcal{F}$.

    A \textbf{minimal cover} is a cover whose proper subsets are not covers.

    Let $\mathcal{MC}(\mathcal{F})$ denote the family of minimal covers of $\mathcal{F}$.
\end{definition}
Equivalently, a cover is a vertex cover when we see $\mathcal{F}$ as the edge set of a hypergraph on $n$ vertices. 

In this case, we can characterize when $\mathcal{MC}(F)$ is uniform:
\begin{theorem}
    Let $\mathcal{F}$ be a family, not necessarily be union-closed.
    \begin{enumerate}
        \item $\mathcal{MC}(\mathcal{F})$ is an antichain. In other words, any two elements in $\mathcal{MC}(\mathcal{F})$ are incomparable.
        \item If $\mathcal{F}$ is an antichain, then $\mathcal{MC}(\mathcal{MC}(\F))=\F$.
        \item Subsequently, let $\mathcal{G}$ be the subfamily of minimal elements in $\mathcal{F}$, then $\mathcal{MC}(\F)$ is $k$-uniform if and only if $\mathcal{G}$ is the minimal covers of a $k$-uniform hypergraph.
    \end{enumerate}
\end{theorem}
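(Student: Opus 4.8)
The plan is to handle the three parts in order, treating the biconjugation identity $\mathcal{MC}(\mathcal{MC}(\mathcal{F}))=\mathcal{F}$ in part (2) as the core and deriving parts (1) and (3) around it. Throughout I fix a finite ground set $V$ (so complements make sense) and assume $\varnothing\notin\mathcal{F}$, so that covers exist. Part (1) is immediate from minimality: if $S,S'\in\mathcal{MC}(\mathcal{F})$ with $S\subsetneq S'$, then $S$ is a cover of $\mathcal{F}$ that is a proper subset of $S'$, contradicting that $S'$ is a \emph{minimal} cover. Hence $\mathcal{MC}(\mathcal{F})$ contains no two comparable members, i.e. it is an antichain.

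For part (2) the key step is a description of the covers of $\mathcal{MC}(\mathcal{F})$, which I will isolate as a lemma: a set $S\subseteq V$ is a cover of $\mathcal{MC}(\mathcal{F})$ if and only if $S$ contains some member of $\mathcal{F}$. The ``if'' direction is easy, since if $F\subseteq S$ with $F\in\mathcal{F}$, then every minimal cover $M$ of $\mathcal{F}$ meets $F$ and hence meets $S$. For the ``only if'' direction I argue by contraposition through complementation: if $S$ contains no member of $\mathcal{F}$, then every $F\in\mathcal{F}$ has a point in $V\setminus S$, so $V\setminus S$ is itself a cover of $\mathcal{F}$; shrinking it to a minimal cover $M\subseteq V\setminus S$ produces a member of $\mathcal{MC}(\mathcal{F})$ disjoint from $S$, so $S$ fails to cover $\mathcal{MC}(\mathcal{F})$. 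Given the lemma, the minimal covers of $\mathcal{MC}(\mathcal{F})$ are exactly the minimal sets containing a member of $\mathcal{F}$, which are precisely the minimal elements of $\mathcal{F}$; when $\mathcal{F}$ is an antichain these are all of $\mathcal{F}$, yielding $\mathcal{MC}(\mathcal{MC}(\mathcal{F}))=\mathcal{F}$.

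Part (3) I will reduce to part (2) via two observations. First, a set covers $\mathcal{F}$ if and only if it covers the subfamily $\mathcal{G}$ of minimal elements, because every $F\in\mathcal{F}$ contains some $F'\in\mathcal{G}$, so meeting $\mathcal{G}$ forces meeting $\mathcal{F}$; hence $\mathcal{MC}(\mathcal{F})=\mathcal{MC}(\mathcal{G})$, and since $\mathcal{G}$ is an antichain, part (2) gives $\mathcal{MC}(\mathcal{MC}(\mathcal{G}))=\mathcal{G}$. Second, a $k$-uniform hypergraph is automatically an antichain. For the forward direction, if $\mathcal{MC}(\mathcal{F})=\mathcal{MC}(\mathcal{G})$ is $k$-uniform, set $H:=\mathcal{MC}(\mathcal{G})$; then $H$ is a $k$-uniform hypergraph and $\mathcal{G}=\mathcal{MC}(H)$, exhibiting $\mathcal{G}$ as the minimal covers of a $k$-uniform hypergraph. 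Conversely, if $\mathcal{G}=\mathcal{MC}(H)$ for some $k$-uniform $H$, then $H$ is an antichain, so $\mathcal{MC}(\mathcal{F})=\mathcal{MC}(\mathcal{G})=\mathcal{MC}(\mathcal{MC}(H))=H$ is $k$-uniform.

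The main obstacle is the complementation argument inside the lemma of part (2): it is precisely the classical transversal/blocker duality for clutters, and its only subtleties are the tacit finiteness of $V$ (so that $V\setminus S$ is available) together with the assumption $\varnothing\notin\mathcal{F}$ (so that covers exist and the ``only if'' direction does not degenerate). Once this lemma is established, the identification of minimal covers of $\mathcal{MC}(\mathcal{F})$ with the minimal elements of $\mathcal{F}$, and the whole of part (3), reduce to routine bookkeeping.
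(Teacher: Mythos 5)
Your proof is correct. It is organized differently from the paper's, though the underlying combinatorial move (passing to complements) is the same. The paper proves only part (2), by establishing the two inclusions directly: for $\F\subseteq\mathcal{MC}(\mathcal{MC}(\F))$ it shows $F^{\complement}\cup\{f\}$ is a cover of $\F$ (using the antichain hypothesis) and hence contains a minimal cover avoiding $F\setminus\{f\}$; for the reverse inclusion it picks $a_F\in F\setminus C$ for each $F$ and derives a contradiction. You instead isolate a single lemma characterizing \emph{all} covers of $\mathcal{MC}(\F)$ as exactly the supersets of members of $\F$; both of the paper's arguments are special cases of the two directions of your lemma (the $\{a_F\}$ construction is precisely your ``shrink $V\setminus S$ to a minimal cover'' step). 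Your packaging buys two things: it yields the slightly stronger identity $\mathcal{MC}(\mathcal{MC}(\F))=\mathcal{G}$ for an \emph{arbitrary} family $\F$, not just an antichain, and it makes part (3) --- which the paper dismisses without proof --- genuinely routine, since $\mathcal{MC}(\F)=\mathcal{MC}(\mathcal{G})$ and the observation that a $k$-uniform family is automatically an antichain close both directions of the equivalence. The only caveats are the ones you already flag (finite ground set, $\varnothing\notin\F$), which the paper also leaves tacit.
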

\begin{proof}
    We only have to prove the second claim. 
    \begin{description}
            \item[$\F\subseteq \mathcal{MC}(\mathcal{MC}(\F))$:] For $F\in \F$, it is a cover of $\mathcal{MC}(\F)$ since every $C\in \mathcal{MC}(\F)$ intersects $F$. To show that $F\setminus\{f\}$ is not a cover of $\mathcal{MC}(\F)$ for each element $f\in F$, we need to find a minimal cover $C\in \mathcal{MC}(\F)$ contained in  $F^{\complement}\cup\{f\}$. Since $\F$ is an antichain, $F^{\complement}\cup\{f\}$ is indeed a cover and therefore such $C$ exists.
            \item[$\mathcal{MC}(\mathcal{MC}(\F))\subseteq \F$:] For $C$ a minimal cover of $\mathcal{MC}(F)$, we show that it covers a set in $\F$. If this is true, $C$ must in $\mathcal{F}$ as sets in $\mathcal{F}$ are covers of $\mathcal{MC}(\F)$.
            
            Were it not, then for each $F\in \F$, there is an element $a_F\in F$ that $C$ does not have. Notice that $\{a_F:F\}$ is a cover of $\F$, it contains a minimal cover of $\F$. It lies in $\mathcal{MC}(\F)$ but does not intersect $C$, giving a contradiction.            
        \end{description}
\end{proof}

We hope to understand the potential counterexamples for \cref{conj:Nagel} when $k=2$ better by understanding the families whose minimal $2$-good sets are uniform.

% \section*{Acknowledgement}
% The author thanks his advisor Shagnik Das for providing critical writing and revising suggestions.

% \bibliographystyle{IEEEtran}
\bibliographystyle{amsplain}
\providecommand{\bysame}{\leavevmode\hbox to3em{\hrulefill}\thinspace}
\providecommand{\MR}{\relax\ifhmode\unskip\space\fi MR }
% \MRhref is called by the amsart/book/proc definition of \MR.
\providecommand{\MRhref}[2]{%
	\href{http://www.ams.org/mathscinet-getitem?mr=#1}{#2}
}
\providecommand{\href}[2]{#2}

\end{document}